\newcommand{\be}{\begin{eqnarray}}
\newcommand{\ee}{\end{eqnarray}}
\newcommand{\card}{\mbox{\rm card}}
\newcommand{\viert}{\frac{1}{4}}
\newcommand{\eps}{\epsilon}
\newcommand{\del}{\delta}
\newcommand{\Z}{{\mathbb Z}}
\newcommand{\F}{{\mathcal F}}
\newcommand{\Seq}{{\mathcal S}}
\newcommand{\HH}{{\mathcal H}}
\newcommand{\A}{{\mathcal A}}
\newcommand{\Gr}{{\mathcal G}}
\newcommand{\tor}{\mathbb T}
\renewcommand{\P}{{\mathbb P}}
\newtheorem{theorem}{Theorem}[section]
\newtheorem{lemma}[theorem]{Lemma}
\newtheorem{cor}[theorem]{Corollary}
\theoremstyle{definition}
\theoremstyle{remark}
\numberwithin{equation}{section}
\begin{document}

\title[Two Erd\H{o}s problems]{Two Erd\H{o}s problems on lacunary sequences:
 \mbox{Chromatic~number~and~Diophantine~approximation}}

\author{Yuval Peres}

\address{Microsoft Research, Redmond, WA
  and University
 of California, Berkeley, CA}
\email{peres@microsoft.com}

\author{Wilhelm Schlag}\thanks{Research of Peres was
partially supported by NSF grant  DMS-0605166.
 Research of Schlag was partially supported by NSF
grant DMS-0617854.}
\address{Department of Mathematics, University of Chicago.}

\email{schlag@math.uchicago.edu}
\subjclass{Primary: 05C15 
  Secondary: 42A55, 
      11B05} 

\keywords{Lacunary sequence, chromatic number, diophantine approximation}

\begin{abstract}
Let $\{n_k\}$ be an increasing lacunary sequence, {\em i.e.},
 $n_{k+1}/n_k>1+\epsilon$ for some $\epsilon>0$. In 1987,
P. Erd\H{o}s asked for the chromatic number $\chi(G)$ of a graph $G$ with vertex set $\Z$,
where two integers  $x,y \in \Z$ are connected by an edge
iff their difference $|x-y|$ is in the sequence $\{n_k\}$.
Y.~Katznelson found  a connection to a Diophantine approximation problem (also due
to Erd\H{o}s): the existence of $\theta \in (0,1)$  such that
all the multiples $n_j \theta$ are at least distance $\del(\theta)>0$
from $\Z$. Katznelson showed that
$\chi(G) \le C\epsilon^{-2}|\log \eps|$.
We apply the Lov\'asz local lemma to establish that
$\del(\theta)>c \epsilon |\log \eps|^{-1}$ for some $\theta$,
which implies that $\chi(G)< C\epsilon^{-1} |\log \eps|$.
This is sharp up to the logarithmic factor.
\end{abstract}

\maketitle

\section{{\bf Introduction}}

\noindent
The chromatic number~$\chi(\Gr)$ of a graph $\Gr$
is the minimal number of colors that can be assigned to the vertices of $\Gr$
 so that no edge connects two vertices of the same color.
In 1987, Paul Erd\H{o}s posed the following
problem\footnote{according to Y. Katznelson~\cite{Kat}.}:

\vspace{.3cm} \it
\noindent {\bf Problem A:} Let $\eps>0$ be fixed and suppose $\Seq=\{n_j\}_{j=1}^\infty$ is a sequence of
positive integers such that $n_{j+1}>(1+\eps) n_j$ for all~$j \ge 1$. Define a graph~$\Gr=\Gr(\Seq)$ with
vertex set~$\Z$ (the integers)
 by letting the pair $(n,m)$ be an edge iff $|n-m|\in\Seq$.
Is the chromatic number~$\chi(\Gr)$ finite?

\vspace{.3cm} \rm

\noindent There is a related Diophantine approximation problem, posed earlier by Erd\H{o}s~\cite{E1}:
\vspace{.3cm}

\noindent {\bf Problem B:} \it
Let $\eps>0$ and $\Seq$ be as above.
Is there a number $\theta\in(0,1)$ so that the sequence
$\{n_j \theta\}_{j=1}^\infty$ is not dense modulo $ 1$?
\rm

\vspace{.4cm}
\noindent The relation between Problem~A and Problem~B was discovered by Katznelson~\cite{Kat}: Let $\del>0$ and
$\theta\in(0,1)$ be such that $\inf_j\|\theta n_j\|>\del$, where $\|\cdot\|$ denotes the distance to the
closest integer. Partition the circle $\tor=[0,1)$ into $k=\lceil\del^{-1}\rceil$
disjoint intervals $I_1,\ldots, I_k$ of length
$\frac{1}{k}\le\del$. Let~$\Gr$ be the graph from Problem~A and color the vertex~$n\in\Z$ with color~$j$
iff $n \theta \in I_j \pmod{1}$. Clearly, any two vertices connected by an edge must have different
 colors. Therefore, $\chi(\Gr) \le k =\lceil\del^{-1}\rceil$.
 From this, one can easily deduce that $\chi(\Gr)<\infty$ for any lacunary sequence with ratio at least $1+\epsilon$
 by partitioning  into several subsequences
 (see the end of the introduction); however, the bound obtained this way grows exponentially in $\eps^{-1}$.

 Problem~B was solved by de Mathan~\cite{Ma} and Pollington~\cite{Poll} and their proofs provide
 bounds on~$\chi(\Gr)$ that grow polynomially in~$\eps^{-1}$. More precisely, they show
that there exists~$\theta\in(0,1)$ such that
\[ \inf_{j\ge1}\|\theta n_j\|>c \eps ^{4}|\log\eps|^{-1}\]
where $c>0$ is some constant. This bound was improved by Katznelson~\cite{Kat},
who showed that there exists a $\theta$ such that
\be \label{katz} \inf_{j\ge 1}\|\theta n_j\|>c \eps^{2}|\log \eps|^{-1}. \ee
Akhunzhanov and  Moshchevitin~\cite{AM}  removed
the logarithmic factor on the right hand side of \eqref{katz},  see also Dubickas~\cite{Du}.

We can now state the main result of this note.

\begin{theorem} \label{thm:main}
Suppose $\Seq=\{n_j\}$ satisfies $n_{j+1}/n_j \ge 1+\eps$, where $0<\eps<1/4$. 
Then there exists $\theta \in (0,1)$ such that
\be\label{mlogm}
\inf_{j\ge 1}\|\theta n_j\|>c \eps|\log \eps|^{-1}\,,
\ee
where $c>0$ is a universal constant. Therefore, the graph $\Gr=\Gr(\Seq)$ described in Problem A
satisfies $\chi(\Gr) \le 1+c^{-1} \eps^{-1} |\log \eps|$.
\end{theorem}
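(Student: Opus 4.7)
The plan is to reduce the chromatic number bound to the Diophantine assertion \eqref{mlogm} via the coloring already described in the introduction (partition $\tor$ into $\lceil 1/\delta\rceil$ equal arcs and assign $n\in\Z$ the arc containing $n\theta\bmod 1$). With $\delta$ of order $\eps|\log\eps|^{-1}$ this immediately yields $\chi(\Gr)\le 1+c^{-1}\eps^{-1}|\log\eps|$, so the real task is to produce a $\theta\in(0,1)$ satisfying \eqref{mlogm}.

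For \eqref{mlogm} I would draw $\theta$ at random from a suitable probability space and apply the Lov\'asz Local Lemma to the bad events
\[
A_j:=\{\theta:\|n_j\theta\|\le\delta\},\qquad \P(A_j)=2\delta.
\]
Partition the sequence into dyadic blocks $\Seq_\ell:=\Seq\cap[2^\ell,2^{\ell+1})$; the lacunarity condition forces $|\Seq_\ell|\le\log 2/\log(1+\eps)\le C\eps^{-1}$. The driving heuristic is that $A_j$ and $A_k$ decouple as soon as the log-ratio $|\log_2(n_k/n_j)|$ exceeds $L:=|\log_2\delta|+O(1)$: perturbing $\theta$ on the scale $\delta/n_j$ required to toggle $A_j$ rotates $\{n_k\theta\}$ through many full circles if $n_k/n_j>1/\delta$, so $A_k$ is conditionally almost uniform on that scale, and the symmetric reasoning handles $n_k\ll n_j$. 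Accepting the heuristic, the dependency neighborhood of each $A_j$ has size at most $d\le(2L+1)\max_\ell|\Seq_\ell|\le C\eps^{-1}|\log\delta|$, so the symmetric LLL criterion $2e(d+1)\delta\le1$ becomes $\delta|\log\delta|\lesssim\eps$, which holds for $\delta=c\eps|\log\eps|^{-1}$ with $c>0$ a small absolute constant. Running LLL on each finite prefix $\{n_1,\dots,n_N\}$ produces a set of admissible $\theta$'s of positive Lebesgue measure in $[0,1]$; the nested intersection over $N$ is nonempty by compactness, yielding the required $\theta$.

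The main obstacle is translating the decoupling heuristic into an actual independence (or lopsided-independence) statement compatible with LLL. A uniform $\theta\in[0,1]$ is not immediately suitable, because high-order binary digits of $\theta$ contribute nontrivially to $\{n_j\theta\}$ at every scale, so no local dependency graph exists on the nose. The plan is to replace $\theta$ by a uniform variable on the lattice $M^{-1}\Z\cap[0,1)$ with $M\gg\delta^{-1}$ and group its digits into independent scale-blocks $X_\ell$ of width comparable to $L$, arranged so that each $A_j$ with $n_j\in\Seq_\ell$ is determined, up to an error $\ll\delta$, by $(X_{\ell-L},\dots,X_{\ell+L})$; the negligible residual error is absorbed by a slight shrinkage of $\delta$, or handled by the lopsided form of LLL due to Erd\H{o}s--Spencer. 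Constructing this decomposition and verifying the near-measurability claim is where essentially all the technical work lies; once it is in place, the LLL estimate above closes the argument.
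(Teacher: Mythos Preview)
Your overall strategy and parameter count are right --- the Local Lemma with $\delta\asymp\eps|\log\eps|^{-1}$ and a dependency range of about $\eps^{-1}|\log\delta|$ is exactly what the paper does --- but the paper implements the independence step quite differently, and more simply, than your proposed digit-block discretization.

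Two differences are worth noting. First, the paper uses a \emph{one-sided} (sequential) variant of the Local Lemma rather than the symmetric version with a dependency graph: one only needs to bound $\P\bigl(A_i\,\big|\,\bigcap_{j<i-h}A_j^c\bigr)$ for a single lag $h\asymp\eps^{-1}|\log\delta|$, so there is no need to control correlations with \emph{future} events $A_k$, $k>i$. Second, and this is the device that replaces your digit-block construction, the paper \emph{thickens} each bad event $E_j=\{\|n_j\theta\|<\delta\}$ to a union $A_j\supset E_j$ of open dyadic intervals of length $2^{-\ell_j}\asymp\delta/n_j$. The complement $\bigcap_{j<i-h}A_j^c$ is then automatically a disjoint union of dyadic intervals of length $2^{-\ell_{i-h-1}}$, and on each such interval the conditional probability of $A_i$ is bounded by a trivial count: at most $(1+|I|n_i)\cdot 2\cdot 2^{-\ell_i}/|I|\le 8\delta+4n_{i-h-1}/n_i$. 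Since $n_{i-h-1}/n_i\le 2^{-h/M}\le\delta$ for the chosen $h$, this gives $\le 12\delta$ unconditionally, and the LLL hypothesis closes exactly as in your calculation.

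So rather than discretizing $\theta$ and carving its digits into independent blocks, the paper discretizes the \emph{events} to the dyadic grid; the ``near-measurability'' you are worried about is then exact, and the whole argument fits in half a page. Your route should also work, but it is heavier and the step you flag as ``where essentially all the technical work lies'' is, in the paper's approach, a two-line counting argument.
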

Up to the $|\log \eps|^{-1}$ factor, (\ref{mlogm}) cannot be improved. Indeed, let $n_j=j$
for~$j=1,2,\ldots,\lfloor \eps^{-1}\rfloor$ and continue this as a lacunary sequence with
ratio~$1+\eps$.  It is clear that  $\chi(\Gr)>\lfloor{\eps}^{-1}\rfloor$ in this case,
so that the power of~$\eps$
in~\eqref{mlogm} cannot be decreased.

In order to prove Theorem \ref{thm:main}, we use the Lov\'asz local lemma from probabilistic combinatorics,
see \cite{EL} or \cite[Chap.\ 5]{AES}. Loosely speaking, given events $A_1,A_2,\ldots$ in a
probability space, this lemma bounds
$ \P\Bigl(\bigcap_{j=1}^N A_j^c\Bigr) $
from below if the events $A_j$ have small probability and each $A_i$ is almost independent of most of the others.
See the following section for a precise statement.
Theorem \ref{thm:main} is established in Section~\ref{rotation}.

Finally, we recall how the aforementioned relation between problems~A and~B
yields an easy proof that $\chi(\Gr)<\infty$ for any $\eps>0$.
See \cite{Kat}, \cite[Chap.\ 5]{Wei} and  \cite{RTV}  for variants of this argument.
 First suppose that~$n_{j+1}/n_j>4$ for all $j$.
In this case,
\be\label{durch} && \bigcap_{j=1}^\infty \Bigl\{\theta\in\tor\::\:\|\theta n_j\|>\viert\Bigr\}\not=\emptyset.\ee
Indeed, fix some $j$ and notice that the set $\{\theta\in\tor\::\:\|\theta n_j\|>\viert\}$ is the union of the
middle halves of intervals~$[\frac{\ell}{n_j},\frac{\ell+1}{n_j}]$ where $\ell=0,1,\ldots,n_j-1$.
Since $n_{j+1}>4n_j$, each such middle half contains an entire interval of the form
$[\frac{\ell'}{n_{j+1}},\frac{\ell'+1}{n_{j+1}}]$. Iterating this yields a sequence
of nested intervals and establishes (\ref{durch}).

  Now suppose just that $n_{j+1}/n_j>1+\eps>1$.
Pick~$K=\lceil 2 \eps^{-1}\rceil$ so that $(1+\eps)^K>4$.
Divide the given sequence~$\Seq$ into~$K$ subsequences
$\{n_{Kj+r}\}_{j=0}^\infty$, with $r=1,2,\ldots,K$. Applying~\eqref{durch} to
each such subsequence
yields $(\theta_1,\ldots,\theta_K)\in\tor^K$ so that
\[ \inf_{j \ge 0}\|n_{Kj+r}\:\theta_r\|\ge\viert \text{\ \ for all }r=1,2,\ldots,K.\]
Coloring each integer $m$ according to which quarter of the unit interval
$m\theta_r$ falls into
for $1 \le r \le K$, shows that $\chi(\Gr)\le 4^K$.
Observe that as $\eps \to 0$, this bound grows exponentially in~$\frac{1}{\eps}$.

\section{{\bf A one-sided version of the local lemma}}

The following lemma is the variant of the Lov\'asz local lemma~\cite{EL} that
 we apply to Problem~B above.
Since it is not exactly stated in this form (neither in terms of the
 hypotheses nor the conclusion)
in \cite{EL} or ~\cite{AES}, we provide a proof for the reader's convenience.
We stress, however, that it is a simple adaptation of the argument
 given in chapter~5 of~\cite{AES}.

\begin{lemma}\label{loclem} Let $\{A_j\}_{j=1}^N$ be events in a
probability space $(\Omega, {\F}, \P)$
and let $\{x_j\}_{j=1}^N$ be a sequence of numbers in~$(0,1)$. Assume that for
every~$i\le N$,
there is an integer~$0\le m(i)<i$ so that
\be\label{cond} \P\Bigl(A_i\,\Big|\bigcap_{j< m(i)}A_j^c\Bigr)
&\le& x_i\prod_{j=m(i)}^{i-1}(1-x_j).\ee
Then for any integer  $n \in [1, N]$, we have
\be \P\Bigl(\bigcap_{i=1}^{n} A_i^c\Bigr) &\ge& 
\prod_{\ell=1}^{n}(1-x_\ell)\label{lower}.\ee 
\end{lemma}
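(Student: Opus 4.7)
My plan is to establish (\ref{lower}) by strong induction on $n$, proving the slightly stronger intermediate statement
\[
\P(B_n) \ge \P(B_{k-1}) \prod_{\ell=k}^n (1-x_\ell) \qquad \text{for all } 1 \le k \le n,
\]
where $B_n := \bigcap_{i=1}^n A_i^c$ and $B_0 := \Omega$. Specializing to $k = 1$ recovers (\ref{lower}). The base case $n = 1$ reduces (with the convention that the empty product equals $1$) to $\P(A_1) \le x_1$, which is the $i = 1$ instance of (\ref{cond}).

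For the inductive step I rely on the decomposition
\[
\P(B_n) \;=\; \P(B_{n-1})\bigl(1 - \P(A_n \mid B_{n-1})\bigr).
\]
Combined with the inductive hypothesis at level $n-1$, it then suffices to show that $\P(A_n \mid B_{n-1}) \le x_n$, and this is the heart of the argument. Write $m := m(n)$ and use the containment $B_{n-1} \subseteq B_{m-1}$ to estimate
\[
\P(A_n \mid B_{n-1}) \;=\; \frac{\P(A_n \cap B_{n-1})}{\P(B_{n-1})} \;\le\; \frac{\P(A_n \cap B_{m-1})}{\P(B_{n-1})} \;=\; \frac{\P(A_n \mid B_{m-1})\,\P(B_{m-1})}{\P(B_{n-1})}.
\]
Hypothesis (\ref{cond}) bounds the numerator above by $x_n\,\P(B_{m-1})\prod_{j=m}^{n-1}(1-x_j)$, while applying the inductive hypothesis at $n-1$ with the choice $k = m$ yields the matching lower bound $\P(B_{n-1}) \ge \P(B_{m-1})\prod_{j=m}^{n-1}(1-x_j)$. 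The factor $\P(B_{m-1})\prod_{j=m}^{n-1}(1-x_j)$ cancels top and bottom, giving $\P(A_n \mid B_{n-1}) \le x_n$ as needed, and the induction closes.

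The main obstacle is the bookkeeping with nested conditionings rather than any substantive difficulty; the one genuine subtlety is the boundary case $m(n) = 0$ (which already arises for $n = 1$), handled by declaring $B_{-1} := \Omega$ and interpreting the spurious factor involving $x_0$ in (\ref{cond}) as absent. The implicit requirement that each $\P(B_j)$ be strictly positive (so the conditional probabilities are well-defined) follows automatically from the lower bound itself, since all $x_j \in (0,1)$.
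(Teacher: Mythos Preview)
Your proof is correct and follows essentially the same route as the paper's: both reduce to the inductive claim $\P\bigl(A_n \mid \bigcap_{j<n} A_j^c\bigr) \le x_n$, bounding the numerator via hypothesis~(\ref{cond}) and the denominator via the inductive hypothesis. The only cosmetic difference is that you package the induction as the two-parameter inequality $\P(B_n)\ge\P(B_{k-1})\prod_{\ell=k}^n(1-x_\ell)$, whereas the paper carries the single-index claim $\P(A_\ell\mid B_\ell)\le x_\ell$ (with $B_\ell=\bigcap_{j<\ell}A_j^c$) and telescopes when the ratio $\P(B_\ell\mid B_{m(\ell)})$ is needed.
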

\begin{proof} Denote $B_1=\Omega$ and
$B_\ell=A_1^c\cap\ldots\cap A_{\ell-1}^c$ for $\ell > 1$.
 We claim that for each $\ell\ge 1$,
\be\label{claim} \P(A_\ell\,|B_\ell) &\le& x_\ell.\ee
Since
\[\P\Bigl(\bigcap_{i=1}^{n} A_i^c\Bigr) = \prod_{\ell=1}^{n}
\Bigl(1-\P(A_{\ell}\, | B_{\ell})\Bigr),\]

\eqref{claim} implies~\eqref{lower}.
The claim  \eqref{claim} is verified inductively:

\be \P\Bigl(A_{\ell}\,\Big|B_\ell\Bigr) =
    \frac{\P\Bigl(A_\ell\cap B_\ell \, \Big|B_{m(\ell)}\Bigr)}
         {\P\Bigl( B_\ell \, \Big|B_{m(\ell)}\Bigr)}   \le
         \frac{\P\Bigl(A_\ell \, \Big| B_{m(\ell)}\Bigr)}
              {\P\Bigl( B_\ell \, \Big| B_{m(\ell)}\Bigr)} \,.
\label{rat}
\ee
The denominator in the rightmost fraction can be written as a product
\be
 \P\Bigl( B_\ell \, \Big| B_{m(\ell)}\Bigr)=\prod_{j=m(\ell)}^{\ell-1} \Bigl(1-\P(A_j \,| B_j)\Bigr) \, .
\ee
By the inductive hypothesis, this product is at least $\prod_{j=m(\ell)}^{\ell-1}(1-x_j)$,
whereas the numerator in the right-hand side of ~\eqref{rat} is at most
$x_\ell\prod_{j=m(\ell)}^{\ell-1}(1-x_j)$ by~\eqref{cond}. This finishes the proof.
\end{proof}

\section{{\bf Rotation orbits sampled along a lacunary sequence}} \label{rotation}
In this section we present our quantitative result on problem B,
which extends Theorem~\ref{thm:main} and is also applicable to unions of lacunary sequences. Observe that if  $\Seq=\{n_j\}$
is lacunary with ratio $1+\eps$, then it satisfies the hypothesis of the next
theorem with $M=\lceil \eps^{-1} \rceil$. More generally, if $\Seq_1,\ldots,\Seq_\ell$
are lacunary with ratios $1+\eps_1,\ldots,1+\eps_\ell$ respectively,
then their union satisfies the hypothesis with
 $M=\sum_{i=1}^\ell \lceil \eps_i^{-1} \rceil$. We shall assume that $M\ge 4$.

\begin{theorem} \label{thm:dio}
Suppose $\Seq=\{n_j\}$ satisfies $n_{j+M}>2n_j$ for all $j$. Define
\be\label{Ej} E_j &=& \biggl\{\theta\in\tor\::\: \|n_j\theta\|<\frac{c_0}{M \log_2 M}\biggr\}\ee
for $j \ge 1$. If $\;240\,c_0\le 1$, then
\be\label{inter} \bigcap_{j=1}^\infty E_j^c &\not=& \emptyset.\ee
\end{theorem}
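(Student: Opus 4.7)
The plan is to apply Lemma~\ref{loclem} on $(\tor, \text{Lebesgue})$ to the events $A_j := E_j$, with uniform weights $x_j = x$ and backward pointer $m(i) := \max(0, i - L)$, where $x$ is a constant multiple of $\eta := c_0/(M\log_2 M)$ and $L$ is a constant multiple of $M\log_2 M$. Once hypothesis~\eqref{cond} is verified, Lemma~\ref{loclem} yields $\P\bigl(\bigcap_{j=1}^N E_j^c\bigr) \ge (1-x)^N > 0$ for every finite $N$, so each $\bigcap_{j=1}^N E_j^c$ is a closed, nonempty subset of the compact torus; the nested family has the finite intersection property, and~\eqref{inter} follows.

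The heart of the argument is verifying~\eqref{cond}, i.e.\ bounding $\P(E_i \mid B_{m(i)})$, where $B_{m(i)} := \bigcap_{j<m(i)} E_j^c$. The set $B_{m(i)}$ is $\tor$ with the $\sum_{j<m(i)} n_j$ open arcs of $\bigcup_{j<m(i)} E_j$ removed; by the doubling hypothesis $n_{j+M} > 2n_j$, the sum telescopes to $\sum_{j<m(i)} n_j \le 2M\,n_{m(i)-1}$, so $B_{m(i)}$ has at most $K \le 2M\,n_{m(i)-1}$ maximal arcs. Writing $E_i = \bigsqcup_\ell I_\ell$ as $n_i$ equispaced arcs of length $2\eta/n_i$, a Minkowski-sum count of those $I_\ell$ meeting $B_{m(i)}$ gives the elementary bound
\[
|E_i \cap B_{m(i)}| \;\le\; 2\eta\,|B_{m(i)}| \;+\; \frac{4\eta K}{n_i},
\]
the main term reflecting the expected density $2\eta$ of $E_i$ inside the bulk of $B_{m(i)}$ and the error accounting for those $I_\ell$ that straddle one of the at most $2K$ endpoints of $B_{m(i)}$. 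Dividing by $|B_{m(i)}|$ and using the inductive lower bound $|B_{m(i)}| \ge (1-x)^{m(i)-1}$ that Lemma~\ref{loclem} itself supplies, verifying~\eqref{cond} reduces to checking
\[
2\eta \;+\; \frac{8\eta M\, n_{m(i)-1}}{n_i\,(1-x)^{m(i)-1}} \;\le\; x(1-x)^L.
\]
The telescoping $n_i/n_{m(i)-1} \ge 2^{\lfloor L/M\rfloor - 1}$ makes the boundary term exponentially small in $L/M$, while keeping $xL$ bounded by a small constant (afforded by the hypothesis $240\,c_0 \le 1$) ensures $(1-x)^L$ stays close to~$1$, so the right-hand side exceeds $2\eta$ with a margin of order $\eta$.

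The main obstacle is calibrating these constants so that~\eqref{cond} holds \emph{uniformly in $i$}: when $i$ is large, $m(i)$ is large too, and the inductive lower bound on $|B_{m(i)}|$ shrinks, driving the error term in the denominator. The resolution is to make the geometric decay $n_{m(i)-1}/n_i \le 2^{1-L/M}$ overwhelm the growth of $1/|B_{m(i)}|$; the doubling hypothesis $n_{j+M} > 2n_j$ is exactly strong enough for this balance to be achievable by a one-time choice of $x$ and $L$, which is where the numerical constant $240$ in the statement comes in.
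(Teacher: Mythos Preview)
There is a genuine gap in the verification of~\eqref{cond} for large~$i$. Your error term is
\[
\frac{8\eta M\, n_{m(i)-1}}{n_i\,|B_{m(i)}|}\,,
\]
and you propose to control it using the inductive lower bound $|B_{m(i)}|\ge (1-x)^{m(i)-1}$ together with $n_{m(i)-1}/n_i\le 2^{1-L/M}$. But the latter quantity is a \emph{fixed constant} (depending only on your one-time choice of~$L$), whereas $(1-x)^{-(m(i)-1)}$ diverges as $i\to\infty$ since $m(i)=i-L\to\infty$. No choice of constants $x,L$ can make a fixed number ``overwhelm'' a sequence tending to infinity, so the displayed inequality you need fails for all sufficiently large~$i$. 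The doubling hypothesis controls the ratio $n_{m(i)-1}/n_i$ across the window of width~$L$, not across the entire range $[1,m(i)]$, and it is the latter that enters through $|B_{m(i)}|$.

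The underlying structural problem is that with $A_j=E_j$ the arcs of $B_{m(i)}$ can be many and short: your count $K\le 2M n_{m(i)-1}$ is essentially independent of $|B_{m(i)}|$, so the ratio $K/|B_{m(i)}|$ is unbounded. The paper repairs this by enlarging each $E_j$ to a dyadic cover $A_j\supset E_j$ at scale $2^{-\ell_j}\asymp \eta/n_j$. Then $\bigcap_{j<m(i)}A_j^c$ is automatically a union of dyadic intervals \emph{all of the same length} $2^{-\ell_{m(i)-1}}$; the number of such intervals is exactly $|B_{m(i)}|\cdot 2^{\ell_{m(i)-1}}$, and upon summing the per-interval bound and dividing by $|B_{m(i)}|$, the dependence on $|B_{m(i)}|$ cancels. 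The resulting conditional probability is bounded by $8\delta+4\,n_{m(i)-1}/n_i$, uniformly in~$i$, and the calibration of constants then proceeds as you sketched.
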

\begin{proof} Set
\be\label{deldef} \delta &=& \frac{c_0}{M\log_2 M}.\ee
For each $j=1,2,\ldots$ define an integer $\ell_j$ by
\be\label{scale} && 2^{-\ell_j-1} < \frac{2\delta}{n_j}\le 2^{-\ell_j}.\ee
 Let $A_j$ be the union of all the open dyadic intervals of size $2^{-\ell_j}$ that intersect $E_j$.
 Observe that
$E_j$ is the union of $n_j$ intervals of length~$\frac{2\delta}{n_j}$,  and each one of them is covered by at most two dyadic intervals of length~$2^{-\ell_j}$. Therefore,
\be\label{Aj} && \P(A_j) \le 2\cdot 2^{-\ell_j}n_j\leq 8\delta.\ee
  where $\P$ is Lebesgue measure on $[0,1]$.
Define
\be\label{hdef} h &=& \lceil C_1 \log_2 M\rceil M ,\ee
where $C_1\ge 5$ is a constant to be determined. Our goal is to apply Lemma~\ref{loclem} with $m(i)=i-h$
 and $x_i=x= h^{-1}$ for all $i\in\Z^+$. To verify~\eqref{cond}, fix some $i>h$. Then
\[ \bigcap_{j<i-h} A_j^c = \bigcup_s I_s\]
with dyadic intervals~$I_s$ of length $|I_s|=2^{-\ell_{i-h-1}}$. Hence
\be && \P\Bigl(A_i\cap\bigcap_{j<i-h} A_j^c\Bigr) = \sum_s\P(A_i\cap I_s) \le \sum_s(1+|I_s|n_i)2^{1-\ell_i}\nonumber\\
&& \le \P\Bigl(\bigcap_{j<i-h} A_j^c\Bigr)\Bigl[2\,n_i2^{-\ell_i}+2^{1+\ell_{i-h-1}-\ell_i}\Bigr]\nonumber\\
&&  \le \P\Bigl(\bigcap_{j<i-h} A_j^c\Bigr)\Bigl[8\delta+4\frac{n_{i-h-1}}{n_i}\Bigr].\label{pre}
\ee
To pass to \eqref{pre}, one uses \eqref{scale}.
By~\eqref{hdef},
\be\label{qh1} \frac{n_{i-h-1}}{n_i}\le 2^{-C_1\log_2 M}=M^{-C_1}.\ee
Inserting this bound into~\eqref{pre} yields
\be && \label{12delta} \P\Bigl(A_i\:\Big|\:\bigcap_{j<i-h} A_j^c\Bigr) \le 12\delta,\text{\ \ provided that}\\
 && \frac{c_0M^{C_1}}{M\log_2 M} \ge 1 \text{,\ \ which is the same as\ \ }  M^{-C_1}\le\delta. \label{bed1}
\ee
By \eqref{Aj}, the estimate \eqref{12delta} holds also if $i\le h$.
In order to satisfy \eqref{cond}, we need to ensure that
\be \label{mystery} 12\delta \le x(1-x)^h. \ee
Since $x= h^{-1}\le 1/16$, we have $(1-x)^h\ge 1/3$. Thus \eqref{mystery} will be satisfied if 
\be\label{range} 36\delta \le x=h^{-1}.\ee
By \eqref{deldef} and~\eqref{hdef}, $h \delta=\lceil C_1 \log_2 M\rceil \, c_0/\log_2 M \le \frac{10}{9} C_1c_0$,
 since $M \ge 4$ and $C_1 \ge 5$. Therefore,  \eqref{range} will hold if
\be\label{bed2} 40 C_1 c_0\le 1 \,.\ee
Take $C_1=6$ and $c_0=\frac{1}{240}$; then \eqref{bed2} and \eqref{bed1} are both satisfied.
 By Lemma~\ref{loclem},
\be\label{exp} \P\Bigl(\bigcap_{i=1}^n A_i^c\Bigr) &\ge& \Bigl(1-x\Bigr)^n \,.\ee
Since each of the $A_j^c$ is compact, and $A_j^c \subset E_j^c$, the intersection $\cap_{j=1}^\infty E_j^c$ is nonempty, as claimed.
\end{proof}

\section{{\bf Intersective sets}}

Let $\Seq=\{n_j\}$ with $n_{j+1}\ge(1+\eps)n_j$ be a lacunary sequence of positive integers with ratio $1+\eps>1$. Denote
$M=\lceil \eps^{-1} \rceil$.
We have shown that there is a coloring of the graph~$\Gr$ with at most $CM\log M$ colors.
 Let $\A_{\max}$ be a set of integers of the same color  with upper density
\[ D^*(\A_{max})=\limsup_{N\to\infty}\frac{\card(\A_{\max}\cap [-N,N])}{2N+1} > \frac{c}{M\log M},\]
where $c$ is a constant. By the definition of coloring,
\[ (\A_{\max}-\A_{\max})\cap \Seq=\emptyset.\]
A set $\HH$ is called {\em intersective} if
\[ (\A-\A)\cap\HH \not= \emptyset\]
for any $\A\subset\Z$ with $D^*(\A)>0$. Intersective sets are precisely the Poincar\'e
sets considered by F\"urstenberg~\cite{Furb}, see \cite{BM} and \cite{per}. Generally speaking,
it is not a simple matter to decide whether a given set is intersective or not.
F\"urstenberg~\cite{Fur} and
 S\'{a}rk\"{o}zy~\cite{Sza}  showed that the squares (and more generally, the set $\{P(n)\}_{n \in \Z}$
  where $P$ is a polynomial over $\Z$ that vanishes at some integer) are intersective. In particular,
intersective sets can have zero density. On the other hand,
 the previous discussion shows that any
finite union of lacunary sequences is not intersective.
There are some related concepts of intersectivity
which we briefly recall; for a nice introduction to this subject see chapter~2 in
Montgomery's book~\cite{Mont}. A set of integers $\HH$ is called a {\em van der Corput} set,
if for any sequence~$\{x_j\}_j$ of numbers with the property that $\{x_{j+h}-x_j\}_j$ is uniformly
distributed modulo $1$ for all~$h\in\HH$, one has that $\{x_j\}_j$ is uniformly distributed
modulo $1$. It was shown by  Kamae and Mendes--France~\cite{KM} that any van der Corput set is intersective. There is a more quantitative version of this fact, due to Ruzsa~\cite{Ruz}: Define
\[ \gamma_{\HH}=\inf\int_0^1 T(x)\,dx\text{\ \ where \ }
T(x)=a_0+\sum_{h\in\HH}a_h\cos(2\pi hx)\ge 0 \text{\ \ and \ }\;T(0)=1,\]
$T$ being a trigonometric polynomial. It it is known that $\HH$ is a  van der Corput set iff $\gamma_{\HH}=0$, see~\cite{Ruz} and~\cite{Mont}. Also, let
\[ \delta_{\HH}=\sup\{D^*(\A)\::\: (\A-\A)\cap \HH=\emptyset\}.\]
By definition, $\HH$ is intersective iff $\delta_{\HH}=0$. It was shown in~\cite{Ruz} that
$\delta_{\HH}\le \gamma_{\HH}$. In view of this fact, our Theorem~\ref{thm:main} has the following consequence.
\begin{cor}
\it Let $\Seq=\{n_j\}_j$ be lacunary with ratio $1+\eps$,
 and suppose $T$ is a nonnegative trigonometric polynomial of the form
\[ T(x)=a_0+\sum_{j}a_j\,\cos(2\pi n_j x)\text{\ \ with\ }T(0)=1.\]
Then for some universal constant $c>0$,
\be\label{postrig} \int_0^1 T(x)\,dx &=& a_0 > c\eps\,|\log\eps|^{-1}.\ee
\end{cor}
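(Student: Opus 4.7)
The plan is to derive this Corollary as a direct consequence of Theorem~\ref{thm:main} and Ruzsa's inequality $\delta_{\HH}\le \gamma_{\HH}$ quoted in the preceding paragraph. Since $\int_0^1 \cos(2\pi n_j x)\,dx = 0$ for each $n_j \ne 0$, orthogonality immediately gives $\int_0^1 T(x)\,dx = a_0$, so the statement is really a lower bound on $\gamma_{\Seq}$, and by Ruzsa it suffices to lower bound $\delta_{\Seq}$.

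First I would produce a set $\A \subset \Z$ with large upper density such that $(\A-\A)\cap \Seq = \emptyset$. By Theorem~\ref{thm:main}, the graph $\Gr(\Seq)$ admits a proper coloring with at most $K = 1 + c^{-1}\eps^{-1}|\log\eps|$ colors. Write $\Z = C_1 \cup \ldots \cup C_K$ for the resulting color classes. For every $N \ge 1$,
\[
\sum_{i=1}^K \frac{\card(C_i \cap [-N,N])}{2N+1} = 1,
\]
so at least one color class has density at least $1/K$ on $[-N,N]$. Since $K$ is finite, the finite pigeonhole principle (applied along a subsequence of $N$'s tending to infinity) yields some color class $\A = C_{i^*}$ with
\[
D^*(\A) \ge \frac{1}{K} \ge \frac{c\,\eps}{|\log\eps|+c\eps} \ge c'\,\eps\,|\log\eps|^{-1}
\]
for a universal constant $c'>0$, assuming $\eps < 1/4$. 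Because $\A$ is monochromatic in a proper coloring of $\Gr(\Seq)$, no two elements of $\A$ differ by an element of $\Seq$, i.e.\ $(\A-\A)\cap\Seq = \emptyset$.

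By the definition of $\delta_{\Seq}$, this shows $\delta_{\Seq} \ge c'\eps|\log\eps|^{-1}$. Applying Ruzsa's inequality $\delta_{\Seq} \le \gamma_{\Seq}$ gives $\gamma_{\Seq} \ge c'\eps|\log\eps|^{-1}$. Since the hypothesized $T$ is an admissible competitor in the infimum defining $\gamma_{\Seq}$, we conclude
\[
a_0 \;=\; \int_0^1 T(x)\,dx \;\ge\; \gamma_{\Seq} \;>\; c\,\eps\,|\log\eps|^{-1},
\]
as required. There is no substantive obstacle: everything is a bookkeeping combination of Theorem~\ref{thm:main}, the pigeonhole principle for upper density, the orthogonality relation defining $a_0$, and Ruzsa's inequality. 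The only minor point worth attention is ensuring the pigeonhole step is executed along a common subsequence of $N$'s (so that the $\limsup$ is realized by a single color class), which the finiteness of $K$ guarantees.
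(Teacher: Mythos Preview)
Your argument is correct and is exactly the route the paper has in mind: it presents the Corollary as an immediate consequence of the preceding discussion, namely that the coloring from Theorem~\ref{thm:main} yields a monochromatic class $\A_{\max}$ with $D^*(\A_{\max})>c\eps|\log\eps|^{-1}$, whence $\delta_\Seq$ and then (via Ruzsa's inequality $\delta_\Seq\le\gamma_\Seq$) $a_0\ge\gamma_\Seq$ inherit the same lower bound. Your explicit handling of the pigeonhole step for upper density is a welcome clarification of a point the paper leaves implicit.
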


If one could show that the bound given by~\eqref{postrig} is optimal,
then it would follow that the
$|\log \eps|^{-1} $ factor in~\eqref{mlogm} cannot be removed.

\noindent{\bf Remark.} The first author heard Y. Katznelson present his proof of (\ref{katz})
in a lecture at Stanford in 1991, but that proof only appeared ten years later~\cite{Kat}.
The proof of our main result, Theorem~\ref{thm:main},  was obtained in 1999 and presented 
in a lecture~\cite{lec} at the IAS, Princeton in 2000. 
We thank J. Grytczuk for urging us
to publish this result and providing references for recent work on related problems.

\end{document}